\definecolor{LinkColor}{RGB}{145,80,170}
\definecolor{CiteColor}{RGB}{230,125,65}
\definecolor{FileColor}{RGB}{75,170,130}
\newcommand{\rom}[1]{\uppercase\expandafter{\romannumeral #1\relax}}
\newtheoremstyle{lem}{}{}{\slshape}{}{\bfseries}{}{.5em}{}
\theoremstyle{lem}
\newtheorem{thm}{Theorem}
\newtheorem{lem}[thm]{Lemma}
\newtheorem{prop}[thm]{Proposition}
\newtheorem{cor}[thm]{Corollary}
\newtheorem*{defn*}{Definition}
\newtheoremstyle{rem}{}{}{\slshape}{}{\itshape}{}{.5em}{}
\theoremstyle{rem}
\newtheorem{rem}[thm]{Remark}
\newtheoremstyle{pr}{}{}{}{}{\scshape}{.}{.5em}{}
\theoremstyle{pr}
\newtheorem*{pr*}{Proof}
\newcommand{\Z}{\mathds{Z}}
\newcommand{\R}{\mathds{R}}
\newcommand{\C}{\mathds{C}}
\newcommand{\D}{\mathds{D}}
\newcommand{\Id}{\mathds{1}}
\newcommand{\del}{\partial}
\newcommand{\nrg}[1]{\mathrm E(#1)}
\newcommand{\nrgL}[2]{\mathrm E_{#1}(#2)}
\newcommand{\Ham}{\mathrm{Ham}}
\newcommand{\LHam}{\mathcal L^{\Ham}}
\let\phi\varphi
\let\epsilon\varepsilon
\begin{document}

\title{A Hölder-type inequality for the Hausdorff distance between Lagrangians}
\author{Jean-Philippe Chassé\thanks{The first author was partially supported by the Swiss National Science Foundation (grant number  200021\_204107).} , Rémi Leclercq\thanks{The second author is partially supported by the ANR grant 21-CE40-0002 (CoSy).}}

\maketitle

\abstract{\noindent
  We prove a Hölder-type inequality (in the spirit of Joksimović--Seyfaddini \cite{JoksimovicSeyfaddini2022}) for the Hausdorff distance between Lagrangians with respect to the Lagrangian spectral distance or the Hofer--Chekanov distance. This inequality is established via methods developped by the first author \cite{Chasse2023,Chasse2022} in order to understand the symplectic geometry of certain collections of Lagrangians under metric constraints.
}

\section{Introduction}
\label{sec:introduction}

Let $(M,\omega)$ be a symplectic manifold with an $\omega$-compatible almost complex structure $J$. If $M$ is noncompact, we assume that $J$ is convex at infinity. We equip $M$ with the Riemannian metric $g=g_J=\omega(\cdot,J\cdot)$~---~we may assume it is complete and geometrically bounded (cf.~\cite{CieliebakGinzburgKerman2004}). \par

\subsection{Main result}
\label{sec:main-result}

On one hand, Joksimović and Seyfaddini \cite{JoksimovicSeyfaddini2022} proved a Hölder-type inequality for the $C^{0}$ distance on Hamiltonian diffeomorphism groups and deduced interesting applications to Anosov--Katok pseudo-rotations.

Namely, the inequality is the following:
  \begin{align} \label{eqn:dushan_sobhan}
    d_{C^0}(\Id,\phi)\leq C\sqrt{\gamma(\phi)} \, ||d\phi||,
  \end{align}
where $||d\phi||:=\sup \{ |d\phi_x|^\mathrm{op} \,|\, x \in M \}$.
This inequality holds for any Hamiltonian diffeomorphism $\phi$ of a closed symplectic manifold for which one can define Floer spectral invariants. These invariants and their properties are reviewed in Section \ref{sec:preliminaries}. They induce, on the Hamiltonian diffeomorphism group, the \emph{spectral pseudonorm} $\gamma$ which appears in the inequality. The constant $C$ only depends on the choice of a Riemannian metric on the ambient manifold.

On the other hand, the first author \cite{Chasse2023,Chasse2022} initiated the study of the symplectic geometry of certain sets of Lagrangians under (Riemannian) metric constraints, such as the Hofer geometry of Hamiltonian isotopic Lagrangians with uniformly bounded curvature. Hölder inequalities on such sets between the Hausdorff distance and a large class of metrics were also obtained. This class of metrics contains, in particular, the \emph{Lagrangian spectral distance}, also defined via spectral invariants and denoted $\gamma$ as well. This version of the spectral distance was defined for weakly exact Lagrangians in \cite{Leclercq2008} and for monotone Lagrangians with nonvanishing fundamental in \cite{KislevShelukhin2022}. 

\medskip
The upshot of this note is a Hölder-type inequality for the Hausdorff distance $\delta_H$ between Lagrangians in the spirit of Joksimović and Seyfaddini's inequality, whose proof is based on the methods of \cite{Chasse2023}.

\begin{thm} \label{thm:main}
  Let $L$ and $L'$ be Hamiltonian isotopic, closed, connected Lagrangian submanifolds of $M$.
  Suppose that $L$~---~and thus $L'$~---~is either weakly exact or monotone with $N_{L} \geq 2$ and has nonvanishing quantum homology. Let $\psi$ be a symplectomorphism of $M$ such that $\psi(L)=L'$. There exist constants $\delta=\delta(M,J,L)>0$ and $C=C(M,J,L)>0$ such that whenever $\gamma(L,L')< \delta$, then
	\begin{align} \label{eqn:main}
	\delta_H(L,L')\leq C\sqrt{\gamma(L,L')}\ ||d\psi|| \,. 
	\end{align}
	Furthermore, when $M$ is compact, we may take $\delta=+\infty$.
\end{thm}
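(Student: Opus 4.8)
The plan is to reduce the global estimate to a local computation in a Weinstein neighbourhood, where the square root becomes a classical interpolation inequality. First I would record that $\delta_H(L,L')=\max\{\sup_{x\in L} d(x,L'),\,\sup_{y\in L'} d(y,L)\}$, so it suffices to bound each supremum; the second is symmetric to the first after replacing $\psi$ by $\psi^{-1}$ and noting that $||d\psi^{-1}||$ is controlled by $||d\psi||$ on the relevant sets (and equals it up to the constant when $M$ is compact). So fix $x_0\in L$ realising the first supremum, set $r:=d(x_0,L')$, and aim for $r\le C\sqrt{\gamma(L,L')}\,||d\psi||$.

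Next I would invoke the hypothesis $\gamma(L,L')<\delta$, together with a crude stability estimate, to place $L'$ inside a fixed Weinstein neighbourhood $U\cong(\text{nbhd of }0)\subset T^*L$ in which it is the graph $\Gamma_{\sigma}$ of a closed one-form $\sigma$, locally $\sigma=df$. Up to constants comparing the natural metric on $T^*L$ with $g$, two classical identifications then hold: the fibre distance gives $\sup_{x} d(x,L')\asymp||\sigma||_{\infty}$, and the graph computation of the Lagrangian spectral invariants yields $\operatorname{osc}(f)\le C\,\gamma(L,L')$ (an equality in the weakly exact case, valid here thanks to $N_L\ge2$ and nonvanishing quantum homology). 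Thus it remains to prove the interpolation estimate $||\sigma||_{\infty}\le C\sqrt{\operatorname{osc}(f)}\,||d\psi||$.

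The heart of the argument is the elementary inequality $||\sigma||_{\infty}^{2}\le 2\,||\nabla\sigma||_{\infty}\,\operatorname{osc}(f)$: following a short gradient-type trajectory out of a point of maximal $|\sigma|$ (short enough to stay in one chart, which also accommodates the locally-defined monotone primitive), $|\sigma|$ remains $\ge\tfrac12||\sigma||_{\infty}$ over length $\gtrsim||\sigma||_{\infty}/||\nabla\sigma||_{\infty}$, forcing $f$ to change by $\gtrsim||\sigma||_{\infty}^{2}/||\nabla\sigma||_{\infty}$. The remaining point is that the covariant derivative of $\sigma$ measures the tilt of the tangent planes of $L'$ relative to the fibres: since $T_{\psi(x)}L'=d\psi_x(T_xL)$ sits at slope $\nabla\sigma$ over the zero section, this is a \emph{first-order} quantity, and the bounded geometry of $(M,J,L)$ gives $||\nabla\sigma||_{\infty}\le C\,||d\psi||$. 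Combining the three bounds and using $||d\psi||\ge1$ for a symplectomorphism yields $r\asymp||\sigma||_{\infty}\le C\sqrt{||d\psi||}\,\sqrt{\gamma(L,L')}\le C\sqrt{\gamma(L,L')}\,||d\psi||$. The compact case permits $\delta=+\infty$ because no convexity-at-infinity truncation is needed to place $L'$ in $U$.

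The main obstacle I anticipate is precisely the reduction of the second paragraph: ensuring, uniformly in $\psi$, that $L'$ is a genuine graph over $L$ in a neighbourhood whose width and metric-comparison constants are governed by $(M,J,L)$ and $||d\psi||$. When $||d\psi||$ is large the tilt bound degenerates and $L'$ may fail to project diffeomorphically onto $L$; there I would argue separately that the right-hand side is already large enough to make \eqref{eqn:main} automatic (bounding $\delta_H$ crudely from $\gamma<\delta$ and the geometry), so that the sharp interpolation is only invoked for bounded $||d\psi||$. Making the constants genuinely independent of $\psi$, and handling the monotone case where $\operatorname{osc}(f)$ is replaced by the relevant spectral gap, are the technical points where the methods of \cite{Chasse2023} enter.
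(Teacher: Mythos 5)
Your overall strategy (Weinstein neighbourhood, graphicality, interpolation between $\|\sigma\|_\infty$, $\operatorname{osc}(f)$ and $\|\nabla\sigma\|_\infty$) is genuinely different from the paper's, but it has two gaps that I do not see how to close. The first is the reduction itself: smallness of $\gamma(L,L')$ gives you no ``crude stability estimate'' placing $L'$ inside a fixed Weinstein neighbourhood of $L$ as a graph. Controlling $\delta_H(L,L')$ (let alone $C^1$-closeness, which graphicality requires) in terms of $\gamma(L,L')$ is precisely the content of the theorem, so your fallback for the non-graphical case --- ``bounding $\delta_H$ crudely from $\gamma<\delta$ and the geometry'' --- is circular. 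There is no soft argument here; the only known route from $\gamma$-smallness to $C^0$-closeness passes through hard input ($J$-holomorphic curves and an energy--capacity-type inequality, as in Kislev--Shelukhin), which your proposal never invokes.

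The second gap is the claimed tilt bound $\|\nabla\sigma\|_\infty\leq C(M,J,L)\,\|d\psi\|$. Knowing that $T_{\psi(x)}L'=d\psi_x(T_xL)$ and that $\|d\psi_x\|^{\mathrm{op}}\leq\|d\psi\|$ does \emph{not} bound the slope of the image plane over the zero section: a rotation has operator norm $1$ yet can take a horizontal Lagrangian plane to a vertical one (infinite slope). The paper's own Remark~\ref{remark:symplectomorphism-diffeomorphism} is the cautionary example: the isometry $\psi(x,y)=(-x,-y)$ of $T^*S^1$ has $\|d\psi\|=1$ while the slopes of $L'=\operatorname{graph}(-f')$ are governed by $f''$, a quantity invisible to $\|d\psi\|$. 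So $\|\nabla\sigma\|_\infty$ is a second-order invariant of $L'$ that $\|d\psi\|$ cannot control. For contrast, the paper's proof avoids both issues: it quotes the already-established inequality $s(L;L'')\leq C(g,g|_L)\sqrt{\gamma(L,L'')}$ (Lemma~5 of \cite{Chasse2022}, whose constants depend only on $L$, not on $L''$), applies it to $L''=\psi(L)$ and to $L''=\psi^{-1}(L)$, and converts the second estimate into one for $s(\psi(L);L)$ via the elementary length inequality $\ell(\psi\circ c)\leq\|d\psi\|\,\ell(c)$ together with the \textsc{Symplectic invariance} $\gamma(L,\psi^{-1}(L))=\gamma(\psi(L),L)$. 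That transfer to $\psi^{-1}(L)$ --- so that all hard estimates are made near the fixed Lagrangian $L$ with uniform constants, at the cost of a single factor $\|d\psi\|$ --- is the key idea your proposal is missing.
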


This obviously yields the nondegeneracy of $\gamma$.
\begin{cor}
  Let $L$ be a Lagrangian as in Theorem~\ref{thm:main}. Then, the Lagrangian pseudodistance $\gamma$ is nondegenerate on the set of Lagrangians which are Hamiltonian isotopic to $L$.
\end{cor}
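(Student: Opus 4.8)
The plan is to derive the corollary directly from Theorem~\ref{thm:main}. Recall that $\gamma$ being nondegenerate on the set $\mathcal{L}(L)$ of Lagrangians Hamiltonian isotopic to $L$ means that for any two such Lagrangians $L_0, L_1 \in \mathcal{L}(L)$, the equality $\gamma(L_0, L_1) = 0$ forces $L_0 = L_1$. Since $\gamma$ is already known to be a genuine pseudodistance (symmetric, satisfying the triangle inequality, and vanishing on the diagonal), the only property left to establish is precisely this implication.

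First I would fix $L_0, L_1 \in \mathcal{L}(L)$ with $\gamma(L_0, L_1) = 0$. Since both are Hamiltonian isotopic to $L$, they are in particular Hamiltonian isotopic to one another, so there is a Hamiltonian diffeomorphism $\psi$ with $\psi(L_0) = L_1$; this $\psi$ is a symplectomorphism, so it is an admissible choice in Theorem~\ref{thm:main}. Note that $L_0$ inherits the same hypotheses as $L$ (weak exactness or monotonicity with $N_{L_0} \geq 2$ and nonvanishing quantum homology are invariant under Hamiltonian isotopy), so the theorem applies with $L_0$ in the role of $L$. It yields constants $\delta, C > 0$ depending only on $(M, J, L_0)$ such that $\gamma(L_0, L_1) < \delta$ implies $\delta_H(L_0, L_1) \leq C \sqrt{\gamma(L_0, L_1)}\,\|d\psi\|$.

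Now I would feed in the assumption $\gamma(L_0, L_1) = 0$. Since $0 < \delta$, the smallness hypothesis $\gamma(L_0, L_1) < \delta$ is satisfied, so the inequality~\eqref{eqn:main} holds. Its right-hand side is $C \sqrt{0}\, \|d\psi\| = 0$, where $\|d\psi\|$ is finite because $\psi$ is a smooth diffeomorphism of $M$ (and $L_0$ is compact, so one only needs control on a neighborhood in the noncompact case). Hence $\delta_H(L_0, L_1) \leq 0$, and since the Hausdorff distance is nonnegative we conclude $\delta_H(L_0, L_1) = 0$. As $L_0$ and $L_1$ are closed subsets of $M$, vanishing of the Hausdorff distance forces $L_0 = L_1$, which is exactly nondegeneracy.

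I do not expect any genuine obstacle here, as the corollary is essentially a formal consequence of the main theorem. The one point meriting a word of care is the role of the threshold $\delta$: because $\delta$ is only guaranteed positive (and equals $+\infty$ only in the compact case), the argument works precisely because the hypothesis $\gamma = 0$ trivially lies below any positive threshold, so no largeness issue arises. One should also confirm that $\|d\psi\|$ is finite; this is immediate from smoothness of $\psi$ together with the compactness of $L_0$, which is all that is needed since the inequality only concerns the behavior near $L_0$.
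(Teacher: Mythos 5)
Your argument is correct and is precisely the ``obvious'' deduction the paper has in mind (the paper offers no written proof, only the remark that nondegeneracy follows immediately from Theorem~\ref{thm:main}): apply the inequality with any Hamiltonian diffeomorphism $\psi$ taking $L_0$ to $L_1$, note $\gamma(L_0,L_1)=0<\delta$ and $\|d\psi\|<\infty$, conclude $\delta_H(L_0,L_1)=0$ and hence $L_0=L_1$ since both are closed. Your side remarks on the threshold $\delta$ and the finiteness of $\|d\psi\|$ are exactly the right points to check, so nothing is missing.
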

Thus, this provides a third proof of this result, after Kawasaki's proof \cite{kawasaki2018function} via Poisson bracket invariants \textit{à la} Polterovich--Rosen \cite{MR3241729} and Kislev and Shelukhin's proof \cite{KislevShelukhin2022} via energy-capacity inequalities. However, through the use of the methods of \cite{Chasse2023} in Section~\ref{sec:proof-theorem-1} or with the more direct approach of the alternative proof in Section~\ref{sec:joks-seyf-appr}, the proof does ultimately relies on the same existence result for certain $J$-holomorphic curves as Kislev and Shelukhin~\cite{KislevShelukhin2022}. The innovation here is how we estimate the area of those $J$-holomorphic curves.

\medskip
Before describing in more details how this result relates to the aforementioned previous works, let us make several quick remarks.

%

\begin{rem}[Hofer's geometry]\label{remark:Hofer-distance}
  It is well known that $\gamma$ is bounded from above by the Hofer--Chekanov distance~---~see the properties of the spectral distance in Section \ref{sec:preliminaries}. Hence, Theorem~\ref{thm:main} also holds when $\gamma$ is replaced by the Hofer--Chekanov distance.
\end{rem}

\begin{rem}[Symplectomorphisms]\label{remark:symplectomorphism-diffeomorphism}
  Let us emphasize the fact that Theorem \ref{thm:main} holds for any~---~even noncompactly-supported~---~\emph{symplectomorphism} $\psi$; $L$ and $L'$ are required to be Hamiltonian diffeomorphic only for $\gamma(L,L')$ to be defined. \par
  
  That this is a nontrivial improvement, one can see with a simple example. For example, take $f(x)=\sin(2\pi x)$ and consider $L=\operatorname{graph}(f')$ and $L'=\operatorname{graph}(-f')$ in $T^*S^1$. If we equip $T^*S^1$ with the flat metric, then the symplectomorphism $\psi(x,y)=(-x,-y)$ sends $L$ to $L'$ and is also an isometry, i.e.\ $||d\psi||=1$. However, the only nontrivial isometries of the flat cylinder which are in the connected component of the identity (in $\operatorname{Diff}(T^*S^1)$) are translations, none of which are Hamiltonian diffeomorphisms¨. Therefore, every Hamiltonian diffeomorphism $\phi$ sending $L$ to $L'$ must have $||d\phi||>1$.
\end{rem}

%

%

\begin{rem}[Variant with the norm of the inverse diffeomorphism]\label{remark:inverse-norm}
  When $M$ is compact, there is also an inequality involving $||d\psi^{-1}||$:
\begin{align} \label{eqn:main_inverse}
	\delta_H(L,L')\leq C\sqrt{\gamma(L,L')}\ ||d\psi^{-1}||^2.
\end{align}
This variant of inequality \eqref{eqn:main} will be proved in Section~\ref{sec:an-inequality-with-inverse-norm}.
\end{rem}

\subsection{Main techniques and relations to previous work}
\label{sec:main-techniques}

Theorem \ref{thm:main} is a specialization of the first author's inequality from~\cite{Chasse2023}, which we now recall. For any metric $D$ in a large class of metrics, said of Chekanov type and which includes $\gamma$, if $D(L,L')<\delta=\delta(g,g|_L,g|_{L'})$, then
  \begin{align} \label{eqn:riem_bounds}
    \delta_H(L,L')\leq C(g,g|_L,g|_{L'})\sqrt{D(L,L')} \,.
  \end{align}
By the above notation, we mean that $\delta$ and $C$ depend only on Riemannian bounds of $M$, $L$, and $L'$, e.g.\ the sectional curvature of the first and the $L^\infty$-norm of the second fundamental form of the two latter. The improvement in this note is that we get rid of the dependance of $C$ on metric invariants of $L'$ at the price of an extra $||d\psi||$ term. \par
		
Note that the first author (Lemma 5 in \cite{Chasse2022}) partially improved (\ref{eqn:riem_bounds}) to
	\begin{align} \label{eqn:half_ineq}
		s(L;L')\leq C(g,g|_L)\sqrt{\gamma(L,L')}
	\end{align}
	whenever $\gamma(L,L')·<\delta=\delta(g,g|_L)$, where
	\begin{align}\label{eq:def-of-s-and-hausdorff}
		s(L;L'):=\sup_{x\in L} d_M(x,L')=\sup_{x\in L}\inf_{y\in L'} d_M(x,y) \,.
	\end{align}
Since $\delta_H(L,L')=\max\{s(L;L'),s(L';L)\}$, the left-hand side in (\ref{eqn:half_ineq}) is in general smaller than the one in (\ref{eqn:riem_bounds}). It is through this inequality that Theorem~\ref{thm:main} is proved (see Section \ref{sec:proof-theorem-1}). \par

\subsection{Relations to Joksimovi{\'c} and Seyfaddini's inequality}
\label{sec:relat-JS}

Theorem \ref{thm:main} is a Lagrangian generalization of Joksimovi{\'c} and Seyfaddini's aforementioned inequality \eqref{eqn:dushan_sobhan} for Hamiltonian diffeomorphisms $\phi$ on closed symplectic manifolds.

Note that their inequality directly implies
\begin{align*}
  \delta_H(L,L')\leq\inf_{\phi(L)=L'} d_{C^0}(\Id,\phi)\leq C\inf_{\phi(L)=L'} \Big( \sqrt{\gamma(\phi)}\ ||d\phi|| \Big).
\end{align*}
However, in general, the inequality
\begin{align*}
  \inf_{\phi(L)=L'}\sqrt{\gamma(\phi)}\ ||d\phi||\geq \inf_{\phi(L)=L'}\sqrt{\gamma(\phi)}\,\cdot\!\!\inf_{\phi(L)=L'} ||d\phi||=\sqrt{\gamma(L,L')}\,\cdot\!\! \inf_{\phi(L)=L'}||d\phi||
\end{align*}
is strict. Therefore, our inequality gives a better bound in the Lagrangian case, even when $\phi$ is a Hamiltonian diffeomorphism\footnote{Recall indeed that inequality \eqref{eqn:main} also holds when $\phi$ is a non-Hamiltonian symplectomorphism.}.


\medskip
One notable exception to this is when $L$ is the diagonal in $M\times M$, and $L'$ is the graph of $\phi$. Then, by work of the second author and Zapolsky~\cite{Leclercq2008,LeclercqZapolsky2018}, we know that $\gamma(L,L')=\gamma(\phi)$, so that equality follows. The constant we get here is however hard to compare to theirs.

On the other hand, we present below a different proof of a variant of (\ref{eqn:main}), based on the method from \cite{JoksimovicSeyfaddini2022} which gives a less natural, but more easily comparable, constant see Section \ref{sec:joks-seyf-appr}.

\subsection*{Organization and acknowledgements}
\label{sec:organ-this-note}

After reviewing necessary preliminaries in Section \ref{sec:preliminaries}, we prove Theorem \ref{thm:main} in Section \ref{sec:proof-theorem-1}. Finally, Section \ref{sec:other-stuff-addit} presents the proofs of two inequalities similar to \eqref{eqn:main}, the first one based on Joksimović and Seyfaddini's method in Section \ref{sec:joks-seyf-appr}, the second one involving the inverse norm of $\psi$ as mentioned in Remark \ref{remark:inverse-norm}; see Section \ref{sec:an-inequality-with-inverse-norm}.

\medskip

The main lines of this project were drawn during a stay of the first author at the Institut Mathématique d'Orsay. We thank the Laboratoire Mathématique d'Orsay for making that stay possible. 


%
%

\section{Preliminaries}
\label{sec:preliminaries}

We fix a symplectic manifold $(M,\omega)$ and consider different types of Lagrangian submanifolds. They are characterized by two functions defined on the second homotopy group of $M$ relative to $L$, i.e. the symplectic area and the Maslov class of disks in $M$ with boundary in $L$:
\begin{equation*}
  \omega_{L} : \pi_{2}(M,L) \to \R \quad\mbox{ and }\quad \mu_{L} : \pi_{2}(M,L) \to \Z \,.
\end{equation*}
A Lagrangian submanifold $L$ is called \emph{weakly exact} if $\omega_{L}$ and $\mu_{L}$ vanish identically. Otherwise,  $L$ is called (positively) monotone whenever there exists a positive constant $\kappa_{L}>0$ such that $\omega_{L} = \kappa_{L} \cdot \mu_{L}$. In that case, $\kappa_{L}$ is called the monotonicity constant of $L$. 

When $L$ is monotone, we define its \emph{minimal Maslov number} $N_{L}$ to be the positive generator of $\langle \mu_{L} , \pi_{2}(M,L) \rangle = N_{L}\,\Z$, and we require $N_{L} \geq 2$. 

\medskip
In what follows, we fix a Lagrangian as above and consider the set $\LHam(L)$ of all Lagrangian submanifolds which are Hamiltonian isotopic to $L$. We now recall how the two metrics on $\LHam(L)$ of interest in this note are defined.

\subsection{The Hofer--Chekanov distance}
\label{sec:hofer-distance}

The Hofer norm was introduced by Hofer \cite{MR1059642} on Hamiltonian diffeomorphism groups and extended as a distance to sets of the type $\LHam(L)$ by Chekanov \cite{MR1774099}.

First define the energy of a Hamiltonian function $H:[0,1]\times M\to\R$ as its $L^{(1,\infty)}$-norm:
\begin{equation}\label{eq:Hofer-nrg-H}
  \nrg H = \int_{0}^{1} \left(\max_{M}H_{t} - \min_{M}H_{t}\right) \, dt,
\end{equation}
where $H_t:=H(t,\cdot)$. Then, define the Hofer norm of a Hamiltonian diffeomorphism as
\begin{equation*}
  \| \phi \|_{\mathrm{Hof}} = \inf \left\{ \nrg H \,\middle|\, \phi_{H}^{1} = \phi \right\} \,.
\end{equation*}
Here, $\{\phi_H^t\}_{t\in [0,1]}$ is the Hamiltonian flow of $H$, i.e.\ $\phi_H^0=\Id_M$ and $\frac{d}{dt}\phi_H^t=X_H^t\circ\phi_H^t$, where $X_H^t$ is the unique time-dependent vector field of $M$ such that $\iota(X_H^t)\omega=-dH_t$.

Hofer's norm then yields a distance on $\LHam (L)$ by setting
\begin{equation*}
  d_{\mathrm{Hof}}(L,L') = \inf \left\{\|\phi\|_{\mathrm{Hof}} \,\middle|\, \phi(L) = L'\right\} = \inf \left\{ \nrg H \,\middle|\, \phi_{H}^{1}(L) = L' \right\}
\end{equation*}
for any $L' \in \LHam (L)$.

\begin{rem}
  As noted by Usher~\cite{MR3507263}, replacing the Hofer energy $\nrg H$ in the above expression by the smaller quantity $\nrgL{L}{H}$, defined as in \eqref{eq:Hofer-nrg-H} but with oscillations taken only on $L$ rather than on the whole ambient manifold $M$, yields the same distance.
\end{rem}

\subsection{The Lagrangian spectral distance}
\label{sec:lagr-spectr-dist}

This distance is based on the theory of spectral invariants initiated by Viterbo \cite{MR1157321} via  generating functions and adapted to Floer homology theories by Schwarz \cite{MR1755825} and Oh \cite{MR2103018} in the case of Hamiltonian diffeomorphism groups. The Lagrangian version which is of interest to us here was developed by the second author~\cite{Leclercq2008} in the weakly exact setting and by Zapolsky and the second author~\cite{LeclercqZapolsky2018} in the monotone case~---~see also work by Fukaya, Oh, Ohta, and Ono \cite{MR3986938}, which is based on more advanced techniques such as virtual fundamental cycles and Kuranishi structures.

\paragraph{Lagrangian spectral invariants}

The Lagrangian spectral invariants $\ell(\alpha;H)$ associated to $L$ are defined for any nonzero quantum homology class $\alpha \in \mathrm{QH}_{*}(L)$~---~see~\cite{BiranCornea2009} for the construction of this homology. Since the Lagrangian spectral \emph{distance} only relies on spectral invariants corresponding to the quantum fundamental class of $L$, we do not review the construction of the quantum homology of a Lagrangian, nor define spectral invariants in full generality.
Instead, we assume that the quantum fundamental class of $L$, denoted $[L]$, is nontrivial and only present the properties of $\ell_{+}:=\ell([L], \,\cdot\,)$.

\medskip
The function
$\ell_{+} : C^{0}([0,1]\times M) \rightarrow \mathbb R$
satisfies the following properties.
\begin{enumerate}
  \item \textsc{Continuity.} For any Hamiltonians $H$ and $K$, we have that
    \begin{equation*}
      \int_{0}^{1}\min_{M}(K_{t}-H_{t}) \,dt \leq \ell_{+}(K) - \ell_{+}(H) \leq \int_{0}^{1}\max_{M} (K_{t}-H_{t}) \,dt \,.
    \end{equation*}
  \item \textsc{Triangle inequality.} For all $H$ and $K$, $\ell_{+}(H\sharp K) \leq \ell_{+}(H) + \ell_{+}(K)$.
  \item \textsc{Lagrangian control.} If $H_{t}|_{L} = c(t) \in \mathbb R$ (resp. $\leq$, $\geq$) for all $t$, then
    \begin{equation*}
      \ell_{+}(H) = \int_{0}^{1}c(t) \, dt \qquad \mbox{ (resp. $\leq$, $\geq$).}
    \end{equation*}
  \item \textsc{Non-negativity.} For all $H$, $\ell_{+}(H) + \ell_{+}(\overline H) \geq 0$.
  \item \textsc{Homotopy invariance.} If $H$ is normalized, $\ell_{+}(H)$ only depends on the homotopy class relative to endpoints of the isotopy $\{\phi_{H}^{t}\}_{t \in [0,1]}$, i.e. the class $[\{\phi_{H}^{t}\}_{t \in [0,1]}] \in \widetilde\Ham(M,\omega)$.
  \item \textsc{Symplectic invariance.} For all $H$ and all $\psi \in \mathrm{Symp}(M,\omega)$, $\ell_{+}(H) = \ell'_{+}(H \circ\psi^{-1})$.
  \end{enumerate}

\medskip
Let us make a few comments about these properties and the notation used above.
  \begin{itemize}
  \item In Properties 2 and 4 respectively, $H\sharp K$ denotes the
    Hamiltonian function
    $H_{t}(x) + K\big((\phi_{H}^{t})^{-1}(x)\big)$ which generates the
    isotopy $\{\phi_{H}^{t}\phi_{K}^{t}\}_{t \in [0,1]}$, and
    $\overline H$ is the Hamiltonian function
    $\overline H_{t}(x) = - H_{t}\big((\phi_{H}^{t})^{-1}(x)\big)$ which
    generates $\big\{(\phi_{H}^{t})^{-1}\big\}_{t \in [0,1]}$.
    Properties 1 to 4 are part of Theorem 3 in \cite{LeclercqZapolsky2018}.
  \item Property 3 directly implies that for all $H$,
  \begin{equation*}
  	\int_{0}^{1} \min_{L} H_{t} \,dt \leq \ell_{+}(H) \leq \int_{0}^{1} \max_{L} H_{t} \,dt \,.
  \end{equation*}

  \item In Property 5, the \emph{normalization} refers to the fact that for all $t$, $\int_{0}^{1} H_{t} \omega^{n}=0$. This property appears as Proposition 4 in \cite{LeclercqZapolsky2018}.
    
  \item Finally, concerning Property 6, note that any symplectomorphism $\psi$ induces an isomorphism $\psi_{*} : \mathrm{QH}_{*}(L) \to \mathrm{QH}_{*}(L')$ with $L' = \psi(L)$. The fundamental class of $L$ is mapped to that of $L'$ through this action (up to possible multiplication by a unit of the coefficient field). The notation $\ell'_{+}$ denotes the Lagrangian spectral invariant associated to $L'$ (and its fundamental class). 
Now, \textsc{Symplectic invariance} only expresses the fact that spectral invariants agree with the action of $\psi$ by conjugation on the Hamiltonian diffeomorphism group: for any Hamiltonian function $H$, $\phi_{H\circ \psi}^{t} = \psi^{-1}\phi_{H}^{t}\psi$.
This result is part of Theorem 35 in \cite{LeclercqZapolsky2018}.
\end{itemize}

\paragraph{The Lagrangian spectral distance}

The properties of $\ell_{+}$ above show that not only $\ell_{+}$ defines a function on $\widetilde\Ham(M,\omega)$ with similar properties (see Theorem 41 in \cite{LeclercqZapolsky2018}), but also a pseudodistance on $\LHam (L)$.

Indeed, following \cite{KislevShelukhin2022}, first define the length of a Hamiltonian isotopy $\{\phi_{H}^{t}\}_{t \in [0,1]}$ by $\gamma_{L}(H) = \ell_{+}(H)+\ell_{+}(\overline H)$, then take the infimum over all Hamiltonian isotopies which map $L$ to $L'$:
\begin{equation*}
  \gamma(L,L') = \inf \{ \gamma_{L}(H) \,|\, \phi_{H}^{1}(L) = L'  \} \,.
\end{equation*}

The \textsc{Non-negativity} property of $\ell_{+}$ ensures that $\gamma(L,\cdot)$ takes non-negative values. \textsc{Symplectic invariance} ensures that for any symplectomorphism $\psi$ and any Lagrangian $L' \in \LHam(L)$, $\gamma(L,L') = \gamma(\psi(L),\psi(L'))$. Combined with \textsc{Triangle inequality}, this shows that for all $L'$ and $L''$ in $\LHam(L)$,
\begin{equation*}
  \gamma(L,L'') \leq \gamma(L,L') + \gamma(L',L'') \,.
\end{equation*}
Finally, note that if $L'=\phi^1_H(L)$, then $L=\phi^1_{\overline{H}}(L')$ and
\begin{align*}
	\gamma_{L'}(\overline{H})
	&=\ell'_{+}(\overline{H})+\ell'_{+}(H) \\
	&=\ell_{+}(\overline{H}\circ\phi^1_H)+\ell_{+}(H\circ\phi^1_H) \\
	&=\ell_{+}(\overline{H\circ\phi^1_H})+\ell_{+}(H\circ\phi^1_H) \\
	&=\gamma_{L}(H\circ\phi^1_H),
\end{align*}
where the second line follows from \textsc{Symplectic invariance}, whilst the third one is a direct computation using the fact that $\phi^1_{H\circ\phi^1_H}=(\phi^1_H)^{-1}\phi^1_H\phi^1_H=\phi^1_H$. Since $\gamma(L',L)$ is defined by taking the infimum over all possible Hamiltonians whose diffeomorphism sends $L'$ to $L$, this implies symmetry for $\gamma$. This justifies the following definition.

\begin{defn*}
  Let $L$ be a weakly exact Lagrangian or a monotone Lagrangian with $N_{L}\geq 2$ and nonzero quantum fundamental class. The \emph{Lagrangian spectral distance} between $L_{0}$ and $L_{1} \in \LHam(L)$ is $\gamma(L_0,L_1)$. 
\end{defn*}

The fact that this actually defines a \emph{nondegenerate} distance is, as usual, the ``hard'' part. This was proven fairly simultaneously in \cite{kawasaki2018function} (via Poisson bracket invariants) and \cite{KislevShelukhin2022} (via energy-capacity inequality). This is also a consequence of the main result of the present note.

Finally, let us emphasize the fact that the \textsc{Continuity} property of $\ell_{+}$ obviously yields the well-known fact that 
\begin{equation*}
  \mbox{for all $L' \in \LHam (L)$}, \quad \gamma(L,L') \leq d_{\mathrm{Hof}} (L,L')\,.
\end{equation*}

\section{Proof of Theorem~\ref{thm:main}}
\label{sec:proof-theorem-1}

Fix a Lagrangian submanifold $L$ which satisfies the assumptions of Theorem~\ref{thm:main}. Let $L' = \phi_{H}^{1}(L) \in \LHam(L)$ for some Hamiltonian function $H$, and let $\psi \in \mathrm{Symp}(M,\omega)$ be such that $L' = \psi(L)$. Notice that $\psi^{-1}(L) \in\LHam(L)$ since the Hamiltonian function $H\circ \psi$ generates the isotopy $\{\psi^{-1}\phi_{H}^{t}\psi\}$ which maps $\psi^{-1}(L)$ to $L$ at time 1.

\medskip
Recall from \eqref{eq:def-of-s-and-hausdorff} that the Hausdorff distance between $L$ and $L'$ is defined as $\delta_{H} = \max \{s(L;L'),s(L';L)\}$, where $s(A;B)$ is the supremum of the distance to $B$ of a point in $A$.

From (\ref{eqn:half_ineq}), i.e. Lemma~5 of \cite{Chasse2022}, we get some constants $\delta=\delta(g,g|_L)>0$ and $C=C(g,g|_L)>0$ such that
\begin{align*}
s(L;\psi(L)) \leq C\sqrt{\gamma(L,\psi(L))}
\quad\mbox{ and }\quad
s(L;\psi^{-1}(L)) \leq C\sqrt{\gamma(L,\psi^{-1}(L))}
\end{align*}
whenever $\gamma(L,\psi(L))$ and $\gamma(L,\psi^{-1}(L))$ are smaller than $\delta$. \par

Let $\ell(c)$ denote the length of a smooth path $c:[0,1]\to M$. Then,
\begin{align*}
s(\psi(L);L)
&= \max_{y\in\psi(L)}\min_{\substack{c(0)=y \\ c(1)\in L}} \ell(c) \\
&= \max_{x\in L}\min_{\substack{c(0)=x \\ c(1)\in \psi^{-1}(L)}} \ell(\psi\circ c) \\
&\leq ||d\psi|| \max_{x\in L}\min_{\substack{c(0)=x \\ c(1)\in \psi^{-1}(L)}} \ell(c) \\
&= ||d\psi||\ s(L;\psi^{-1}(L)) \,.
\end{align*}
From this, we immediately get that
\begin{align} \label{eqn:pre-main}
\delta_H(L,L')
&\leq \max\big\{s(L;\psi(L)),||d\psi||\ s(L;\psi^{-1}(L))\big\} \nonumber\\
&\leq C||d\psi||\max\left\{\sqrt{\gamma(L,\psi(L))},\sqrt{\gamma(L,\psi^{-1}(L))}\right\}
\end{align}
since $||d\psi||\geq 1$ for any symplectomorphism $\psi$. Indeed, a symplectic matrix must always have an eigenvalue with absolute value at least 1. \par

By \textsc{Symplectic invariance}, we know that $\gamma(L,\psi^{-1}(L)) = \gamma(\psi(L),L)$, and 
%
(\ref{eqn:pre-main}) gives us the expected inequality (\ref{eqn:main}):
\begin{equation*}
  \delta_{H}(L,L') \leq C\|d\psi\| \sqrt{\gamma(L,L')}
\end{equation*}
under the condition $\gamma(L,L')<\delta$. \par

To get rid of this condition when $M$ is compact, we use Joksimovi{\'c} and Seyfaddini's trick~\cite{JoksimovicSeyfaddini2022}: take $C$ large enough so that
\begin{align*}
	C\geq\frac{\mathrm{Diam}(M)}{\sqrt{\delta}}.
\end{align*} 
Then, if $\gamma(L,L')\geq\delta$, we trivially get
\begin{align*}
	C\sqrt{\gamma(L,L')}\ ||d\psi||\geq \mathrm{Diam}(M)\geq \delta_H(L,L'),
\end{align*}
since $||d\psi||\geq 1$. Here, we have made use of the fact that the distance between two closed subsets of $M$ is at most the diameter of $M$. \par

\medskip
This ends the proof of Theorem \ref{thm:main}. \hfill$\square$

\section{Alternative versions of inequality (\ref{eqn:main})}
\label{sec:other-stuff-addit}

We conclude with two alternative versions of inequality (\ref{eqn:main}): the first one is established by adapting to the Lagrangian setting Joksimović and Seyfaddini's proof from \cite{JoksimovicSeyfaddini2022}, and the other one by using methods explored by Chassé and {leading} to inequality \eqref{eqn:riem_bounds} from \cite{Chasse2023}, rather than using directly inequality \eqref{eqn:half_ineq} from \cite{Chasse2022}.

\subsection{Joksimovi{\'c} and Seyfaddini's approach}\label{sec:joks-seyf-appr}
We could have adapted Joksimovi{\'c} and Seyfaddini's~\cite{JoksimovicSeyfaddini2022} proof of (\ref{eqn:dushan_sobhan}) to the Lagrangian context to get an analogous inequality. We give here the broad idea on how such an inequality is proven. \par
		
For each $x\in L$, take a Darboux chart $\psi_x:U_x\to\R^{2n}$ sending $L\cap U_x$ to $\R^n\times\{0\}$. Take also compact neighborhoods $K_x$ and $K'_x$ of $x$ in $M$ such that
\begin{align*}
	K_x\subseteq \mathrm{int}(K'_x)\subseteq K'_x\subseteq U_x.
\end{align*}
By compactness of $L$, we may take a finite subset $\{\psi_i\}_{1\leq i\leq k}$ of these charts, so that $\{\mathrm{int}(K'_i)\}_{1\leq i\leq k}$ still covers $L$. Then, setting
\begin{align*}
	\epsilon:=\min_{1\leq i\leq k} \min_{\substack{x\in\del K_i\\ x'\in\del K'_i}} d(x,x') 
	\quad\mbox{ and }\quad
	A:=\max_{1\leq i\leq k}\left|\left|\left.d\psi_i^{-1}\right|_{\psi_i(K'_i)}\right|\right|,
\end{align*}
we get the inclusion
\begin{align*}
	\psi_i^{-1}(B^{2n}_r(\psi_i(x)))\subseteq B_{Ar}(x)
\end{align*}
for $r=2\sqrt{\frac{\gamma(L,L')}{\pi}}$ if $\gamma(L,L')<\delta=\frac{\pi\epsilon^2}{4A^2}$. Here, $B^{2n}$ denotes the Euclidean ball in $\R^{2n}$, whilst $B$ is the metric ball in $M$. But then, if $Ar< d(x,L')$, the map $\psi_i^{-1}|_{B^{2n}_r(\psi_i(x))}$ would be a symplectic embedding of a ball of radius $r$ with real part along $L$ not crossing $L'$, so that
\begin{align*}
	\gamma(L,L')\geq \frac{\pi}{2}r^2= 2\gamma(L,L')
\end{align*}
by the proof of Theorem~E of \cite{KislevShelukhin2022}, which is of course a contradiction. Therefore, we must have
\begin{align*}
	d(x,L')\leq 2A\sqrt{\frac{\gamma(L,L')}{\pi}}.
\end{align*}
This gives an inequality analogous to (\ref{eqn:half_ineq})~---~but with a constant depending on local charts~---~by taking the maximum over all $x\in L$. \par
		
\subsection{An inequality with the inverse norm}\label{sec:an-inequality-with-inverse-norm}
When $M$ is compact, there is also an inequality with $\|d\psi^{-1}\|$:
\begin{align} \label{eqn:main_inverse}
	\delta_H(L,L')\leq C\sqrt{\gamma(L,L')}\ \|d\psi^{-1}\|^2.
\end{align}

The proof of (\ref{eqn:main_inverse}) follows the scheme of the proof of (\ref{eqn:riem_bounds}) appearing in \cite{Chasse2023}. We thus recall the idea of said proof.
\begin{enumerate}[label=(\arabic*)]
	\item From the proof of Theorem~E of \cite{KislevShelukhin2022}, we know that there exist, for any $x\in L$ and any $x'\in L'$, $J$-holomorphic strips $u_x$ and $u_{x'}$ with boundary along $L$ and $L'$~---~modulo arbitrarily small Hamiltonian perturbations~---~and passing through $x$ and $x'$, respectively. Furthermore, their area is bounded from above by $2\gamma(L,L')$. \par
	
	\item Using a version of the monotonicity lemma, we get that
	\begin{align*}
	\omega(u_x)\geq A(g,g|_L)r^2
	\end{align*}
	if the closed metric ball $B_r(x)$ does not intersect $L'$ and $r$ is smaller than some $\delta=\delta(g,g|_L)>0$. There is an analogous result for $u_{x'}$ and $L'$. \par
	In particular, if $\gamma(L,L')$ is small enough, the inequality holds for all $r<d_M(x,L')$. Therefore, it holds for $r=d_M(x,L')$. \par
	
	\item Taking the supremum over all $x\in L$ of the inequalities for $L$, we essentially get (\ref{eqn:half_ineq}). Taking the supremum over all $x'\in L'$ of the inequalities for $L'$ gives an analogous inequality for the pair $(L',L)$. Taking the maximum of these two inequalities, we get (\ref{eqn:riem_bounds}). \par
\end{enumerate}
We thus see that the dependence of $C$ in (\ref{eqn:riem_bounds}) on metric invariants of $L'$ comes from the constant $A$ in Step~2. Therefore, proving Theorem~\ref{thm:main} reduces to proving the following proposition.

\begin{prop} \label{prop:monotonicity}
  There exist constants $\delta$ and $A$ depending only on metric invariants of $M$ and $L$ with the following property.
  
  Let $L' \in \LHam (L)$ and let $\psi$ be a symplectomorphism such that $\psi(L)=L'$.
  Let $\Sigma$ be a compact Riemann surface with boundary $\del\Sigma$ with corners. Consider a nonconstant $J$-holomorphic curve $u':(\Sigma,\del\Sigma)\to (B_r(x'),\del B_r(x')\cup L')$ for some $x'\in L'$ and $r\leq \frac{\delta}{\|d\psi^{-1}\|}$ such that $x'\in u'(\Sigma)$. Suppose that $u'$ sends the corners of $\Sigma$ to $\del B_r(x')\cap L'$. Then, 
	\begin{align*}
	\omega(u')\geq \frac{A}{\|d\psi^{-1}\|^2}r^2 \,.
	\end{align*}
\end{prop}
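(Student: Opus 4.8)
The plan is to reduce Proposition~\ref{prop:monotonicity} to the standard monotonicity inequality for $J$-holomorphic curves with Lagrangian boundary by transporting the whole situation back to $L$ via the symplectomorphism $\psi$. The key observation is that $\psi^{-1}$ carries $L'$ to $L$, so the composite $v:=\psi^{-1}\circ u'$ is a $J'$-holomorphic curve with boundary on $L$, where $J':=(\psi^{-1})_*J$ is the pushed-forward almost complex structure. Since $\psi$ is a symplectomorphism, $\omega(v)=\omega(u')$, so areas are preserved; the price is that $v$ is holomorphic for $J'$ rather than for the fixed $J$, and it lives in a metric ball whose radius has been distorted by $\psi^{-1}$.

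First I would set up this transport carefully. The curve $u'$ maps $(\Sigma,\del\Sigma)$ into $(B_r(x'),\del B_r(x')\cup L')$ with corners sent to $\del B_r(x')\cap L'$, and $x'\in u'(\Sigma)$. Applying $\psi^{-1}$, the curve $v$ passes through the point $x:=\psi^{-1}(x')\in L$, has boundary along $L$, and is contained in $\psi^{-1}(B_r(x'))$. Since $\|d\psi^{-1}\|$ controls the Lipschitz constant of $\psi^{-1}$, we have the inclusion $\psi^{-1}(B_r(x'))\subseteq B_{\|d\psi^{-1}\|\,r}(x)$; writing $\rho:=\|d\psi^{-1}\|\,r$, the curve $v$ therefore lies in the metric ball $B_\rho(x)$ around a point of $L$.

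Next I would invoke the monotonicity-type inequality used in Step~2 of the scheme recalled above (the version from \cite{Chasse2023}), but applied now to the fixed data of $L$ together with the \emph{family} of admissible almost complex structures. The crucial point for getting constants depending only on $M$ and $L$ is that the inequality must be uniform over the relevant $J'$. One way to achieve this is to note that geometric boundedness of $(M,g)$ and the compactness of $L$ give a monotonicity constant $A=A(g,g|_L)>0$ and a threshold $\delta=\delta(g,g|_L)>0$ such that any nonconstant curve holomorphic for a compatible $J'$, with boundary on $L$, passing through $x\in L$, and contained in $B_\rho(x)$ with $\rho\leq\delta$, satisfies $\omega(v)\geq A\rho^2$ — provided the admissible class of $J'$ is controlled uniformly. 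I would then impose the hypothesis $r\leq \delta/\|d\psi^{-1}\|$ precisely so that $\rho=\|d\psi^{-1}\|\,r\leq\delta$, making the monotonicity inequality applicable. This yields
\begin{align*}
\omega(u')=\omega(v)\geq A\rho^2=A\,\|d\psi^{-1}\|^2 r^2.
\end{align*}
This is stronger than, but not the form of, the claimed bound; the stated conclusion $\omega(u')\geq \tfrac{A}{\|d\psi^{-1}\|^2}r^2$ is the version that survives after one absorbs the distortion in the opposite direction, so I would double-check the direction of the radius distortion — one can equally run the argument with $\psi$ rather than $\psi^{-1}$, giving $\psi(B_r(x'))$ and the factor $\|d\psi\|$, and then convert using $\|d\psi\|\leq\|d\psi^{-1}\|^{\,2n-1}$ or a Hofer-type bound — and settle the exact exponent there.

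The main obstacle, and the step requiring the most care, is the uniformity of the monotonicity constant $A$ over the almost complex structures $J'=(\psi^{-1})_*J$ as $\psi$ ranges over all symplectomorphisms with $\psi(L)=L'$. The structure $J'$ is $g'$-compatible for the pushed-forward metric $g'$, which is \emph{not} the fixed metric $g$, so the naive monotonicity bound would reintroduce a dependence on $g'$ — exactly the dependence on metric invariants of $L'$ that the proposition is designed to eliminate. The resolution, which is the heart of the argument, is to estimate areas with respect to $\omega$ (a diffeomorphism invariant) rather than with respect to any metric, and to use that the monotonicity lemma for $\omega(v)$ with $J'$-holomorphicity can be phrased so that its constant depends only on a comparison between the ambient metric $g$ and $\omega$ near $L$. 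I expect the delicate point to be quantifying how the $J'$-dependence enters the lower area bound and showing that, once one measures balls with the fixed metric $g$ and tracks the single scalar $\|d\psi^{-1}\|$, all remaining constants can be taken to depend only on $(M,J,L)$. This is precisely the innovation flagged in the introduction — estimating the area of the holomorphic curves without appealing to metric invariants of $L'$.
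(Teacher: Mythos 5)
Your overall strategy---push the whole curve back to $L$ via $\psi^{-1}$ and invoke monotonicity there---is not the paper's, and as written it has two genuine gaps. First, the monotonicity step is misapplied: knowing that $v=\psi^{-1}\circ u'$ is \emph{contained} in $B_{\rho}(x)$ with $\rho=\|d\psi^{-1}\|\,r$ does not give $\omega(v)\geq A\rho^2$. The monotonicity lemma bounds the area from below by the square of the radius of a ball that the curve actually crosses from its center to its boundary sphere; enlarging the containing ball never improves the bound. The radius available here is the \emph{inner} one, namely $d_M\big(x,\psi^{-1}(\del B_r(x'))\big)\geq r/\|d\psi\|$, so the best one could hope for from this route is $\omega(v)\gtrsim A\,r^2/\|d\psi\|^2$. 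The ``too strong'' inequality $\omega(u')\geq A\|d\psi^{-1}\|^2r^2$ you arrive at is a symptom of this error, not a sign that an exponent needs adjusting. Second, and more seriously, the uniformity of the monotonicity constant over $J'=(\psi^{-1})_*J$ is not a technicality one can defer: since $g_{J'}=\psi^*g$, the isoperimetric and monotonicity constants for $(L,g_{J'})$ \emph{are} the metric invariants of $L'$ in disguise, which is exactly the dependence the proposition must eliminate; moreover after transport the balls in the hypothesis are $g$-balls, not $g_{J'}$-balls, so the standard lemma does not even apply verbatim. You correctly flag this as ``the heart of the argument,'' but the resolution you sketch (phrasing the constant via a comparison of $g$ with $\omega$ near $L$) is not substantiated and is where the proof would actually break.

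The paper resolves this differently: the curve is \emph{not} transported. One keeps $u'$ as a $J$-holomorphic curve (for the fixed $J$) in the fixed-metric ball $B_r(x')$ and reruns the proof of Proposition~2.1 of \cite{Chasse2023} word for word. The only ingredient of that proof which sees the geometry of $L'$ is the isoperimetric inequality bounding the local action $a(\gamma')$ of boundary arcs $\gamma'$ on $L'$ by $\ell(\gamma')^2$, and \emph{that} is the object which gets transported by $\psi^{-1}$ (Lemma~\ref{lem:isoperimetric}): the action $a$ is a purely symplectic quantity, hence preserved by $\psi^{-1}$, while the length of an arc is distorted by at most $\|d\psi^{-1}\|$, yielding $a(\gamma')\leq B\|d\psi^{-1}\|^2\ell(\gamma')^2$. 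Feeding this into the usual coarea/Cauchy--Schwarz differential inequality is what produces the factor $\|d\psi^{-1}\|^{-2}$ in the conclusion, and the hypothesis $r\leq\delta/\|d\psi^{-1}\|$ is there precisely so that the transported arcs land within the tameness and injectivity-radius scales of $L$ and $M$, where $a$ is well defined. In short: transport the boundary arcs (a symplectic-area statement), not the holomorphic curve (a $J$-dependent statement).
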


Indeed, Proposition 2.1 of~\cite{Chasse2023}, Proposition~\ref{prop:monotonicity}, and Step~(1) above yield
\begin{align*}
\min\left\{\delta,d_M(x,L')\right\}
&\leq C\sqrt{\gamma(L,L')}
\intertext{for all $x\in L$ and}
\min\left\{\frac{\delta}{\|d\psi^{-1}\|},d_M(x',L)\right\}
&\leq C\sqrt{\gamma(L,L')}\|d\psi^{-1}\|
\end{align*}
for all $x'\in L'$ (with $C=\frac{1}{\sqrt{2A}}$). In particular, if we suppose that $\gamma(L,L')<C^{-2}\delta^2\|d\psi^{-1}\|^{-4}\leq C^{-2}\delta^2$, we get that
\begin{align*}
d_M(x,L')
&\leq C\sqrt{\gamma(L,L')}
\intertext{for all $x\in L$ and}
d_M(x',L)
&\leq C\sqrt{\gamma(L,L')}\|d\psi^{-1}\| 
\end{align*}
for all $x'\in L'$. Taking the maximum over all $x$ and all $x'$, we get $\delta_H(L,L')\leq C\sqrt{\gamma(L,L')}\max\{1,\|d\psi^{-1}\|\}$ as long as $\gamma(L,L')<C^{-2}\delta^2\|d\psi^{-1}\|^{-4}$. This yields (\ref{eqn:main_inverse})~---~with the additional $\gamma$-smallness assumption~---~since $\|d\psi^{-1}\|\geq 1$. \par

If $\gamma(L,L')\geq C^{-2}\delta^2\|d\psi^{-1}\|^{-4}$, take $C'\geq C\delta^{-1}\operatorname{Diam}(M)$, so that
\begin{align*}
C'\sqrt{\gamma(L,L')}\|d\psi^{-1}\|^2\geq \operatorname{Diam}(M)\geq \delta_H(L,L'),
\end{align*}
which gives the desired result.

\medskip
Only Proposition~\ref{prop:monotonicity} is thus now left to prove. In order to do so, we first need a new version of the isoperimetric inequality. For an arc $\gamma':([0,\pi],\{0,\pi\})\to (M,L')$ whose image in contained in the metric ball $B_{\delta/\|d\psi^{-1}\|}(x')$ for some $x'\in L'$, set $a(\gamma')$ to be the symplectic area $\omega(u')$ of any map $u':\D\cap\{\operatorname{Im}z\geq 0\}\to M$ such that $u'(e^{i\theta})=\gamma'(\theta)$ and $u'(\D\cap\R)\subseteq L'$. Here, $\D$ is the unit disk in $\C$. 

First note that this definition is independent of the choice of extension $u'$. To see this, take
\begin{align}
	\delta=\min\left\{\epsilon,\frac{\epsilon}{2}r_\mathrm{inj}(L),\frac{\epsilon}{2}r_0,\frac{\pi}{4\sqrt{K_0}}\right\}.
\end{align}
if $L$ is $\epsilon$-tame (see~\cite{Chasse2023} for the definition) and $M$ has injectivity radius bounded away from zero by $r_0$ and sectional curvature takes values in $[-K_0,K_0]$. Here, $r_\mathrm{inj}(L)$ is the injectivity radius of $L$ with the Riemannian metric induced by $M$. Then, for all $z\in \D\cap\{\operatorname{Im}z\geq 0\}$, we have that
\begin{align*}
	d_M\left(\psi^{-1}(u(z)),\psi^{-1}(x')\right)\leq \|d\psi^{-1}\|\, d_M(u(z),x')\leq \delta,
\end{align*}
i.e.\ $u:=\psi^{-1}\circ u'$ has image in the metric ball $B_\delta(x)$ with $x:=\psi^{-1}(x)\in L$. Take two extensions $u'_0$ and $u'_1$ of an arc $\gamma'$ as above, and denote $\alpha'_i:=u'_i|_\R$ and $\alpha_i:=\psi^{-1}\circ\alpha'_i$. Then, $u_0\#\overline{u_1}$ is a disk whose boundary $\alpha_0\#\overline{\alpha_1}$ lies in $L$. Here, $\overline{f}(a+ib):=f(-a+ib)$ for any map $f:U\subseteq\C\to M$. But by $\epsilon$-tameness of $L$, $\alpha_0\#\overline{\alpha_1}$ must be a loop in a metric ball of $L$ (in the intrinsic metric) of radius $\frac{2\delta}{\epsilon}\leq r_\mathrm{inj}(L)$, and must thus be contractible in the same ball. This nullhomotopy extends to a homotopy in a metric ball of $M$ of radius $\frac{2\delta}{\epsilon}$ of $u_0\#\overline{u_1}$ to a topological sphere. Since $\frac{2\delta}{\epsilon}\leq r_0$, this topological sphere must be itself contractible, so that
\begin{align*}
	0=\omega(u_0\#\overline{u_1})=\omega(u_0)-\omega(u_1)=\omega(u'_0)-\omega(u'_1),
\end{align*}
where the last inequality follows from the fact that $\psi$ is a symplectomorphism. In other words, $a(\gamma')$ is indeed well defined.

We can now prove the following isoperimetric inequality.

\begin{lem} \label{lem:isoperimetric}
	There exist constants $\delta$ and $B$ depending only on metric invariants of $M$ and $L$ such that, for all arcs $\gamma':([0,\pi],\{0,\pi\})\to (M,L')$ with image in $B_{\delta/\|d\psi^{-1}\|}(x')$ for some $x'\in L'$, we have that
	\begin{align*}
		a(\gamma')\leq B\|d\psi^{-1}\|^2\ell(\gamma')^2.
	\end{align*}
\end{lem}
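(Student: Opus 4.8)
The plan is to transfer the isoperimetric inequality already available for arcs with boundary on $L$ over to arcs with boundary on $L'$, by pulling everything back through $\psi^{-1}$ exactly as in the well-definedness argument just given. Since $\psi^{-1}$ is a symplectomorphism it preserves symplectic area, so the only loss occurs in the length, and that loss is controlled by $\|d\psi^{-1}\|$; this is precisely where the factor $\|d\psi^{-1}\|^2$ in the conclusion comes from.

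Concretely, given an arc $\gamma'$ as in the statement, with image in $B_{\delta/\|d\psi^{-1}\|}(x')$, I would set $\gamma:=\psi^{-1}\circ\gamma'$ and $x:=\psi^{-1}(x')\in L$. As computed above, $d_M(\psi^{-1}(y),x)\leq\|d\psi^{-1}\|\,d_M(y,x')\leq\delta$ for every $y$ in the image of $\gamma'$, so $\gamma$ is an arc $([0,\pi],\{0,\pi\})\to(M,L)$ with image contained in $B_\delta(x)$. Given any extension $u'$ of $\gamma'$ as in the definition of $a(\gamma')$, the map $u:=\psi^{-1}\circ u'$ is an extension of $\gamma$ with $u(\D\cap\R)\subseteq L$, and $\omega(u)=\omega(u')$ because $(\psi^{-1})^*\omega=\omega$. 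Hence $a(\gamma')=a(\gamma)$, where $a(\gamma)$ is the corresponding minimal area for arcs ending on $L$.

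Next I would invoke the isoperimetric inequality for $L$ coming from \cite{Chasse2023}: with $\delta$ chosen as in the displayed formula above (so that it is small relative to the $\epsilon$-tameness radius of $L$, the intrinsic injectivity radius $r_\mathrm{inj}(L)$, the ambient injectivity radius $r_0$, and the curvature bound $K_0$ of $M$), there is a constant $B$ depending only on metric invariants of $M$ and $L$ such that $a(\gamma)\leq B\,\ell(\gamma)^2$ for every arc with boundary on $L$ whose image lies in a ball $B_\delta(x)$ with $x\in L$. Combining this with the length-distortion estimate $\ell(\gamma)=\ell(\psi^{-1}\circ\gamma')\leq\|d\psi^{-1}\|\,\ell(\gamma')$ yields
\[
a(\gamma')=a(\gamma)\leq B\,\ell(\gamma)^2\leq B\,\|d\psi^{-1}\|^2\,\ell(\gamma')^2,
\]
which is the asserted inequality.

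The only delicate point is the applicability of the $L$-side inequality: one must check that the radius $\delta$ of the pulled-back ball still falls inside the regime where the inequality from \cite{Chasse2023} is valid, which is exactly why $\delta$ is taken to be the minimum appearing above. Thus the main—indeed essentially the sole—obstacle is bookkeeping: verifying that the constants $\delta$ and $B$ genuinely depend only on metric invariants of $M$ and $L$, and not on $\psi$ or $L'$. This holds because all of the $\psi$-dependence has been quarantined into the single factor $\|d\psi^{-1}\|^2$ produced by the length distortion, so no geometric input beyond \cite{Chasse2023} and the preceding well-definedness discussion is required.
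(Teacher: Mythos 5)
Your proof is correct and follows exactly the same route as the paper's: pull the arc back through $\psi^{-1}$ so that it lands near $L$ inside $B_\delta(\psi^{-1}(x'))$, apply the isoperimetric inequality (Lemma~2.1 of \cite{Chasse2023}) for $L$, and use that $\psi$ preserves symplectic area while distorting length by at most $\|d\psi^{-1}\|$, which accounts for the factor $\|d\psi^{-1}\|^2$. Your write-up simply spells out the well-definedness transfer and the constant bookkeeping in more detail than the paper does.
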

\begin{proof}
	As noted above, we know that $\psi^{-1}\circ\gamma$ has image in the metric ball $B_{\delta}(\psi^{-1}(x'))$. Therefore, by said Lemma~2.1 of \cite{Chasse2023}, we know that
	\begin{align*}
		a(\psi^{-1}\circ\gamma)\leq B(g,g|_L)\,\ell(\psi^{-1}\circ\gamma)^2.
	\end{align*}
	However, $a(\psi^{-1}\circ\gamma)=a(\gamma)$, since $\psi$ is a symplectomorphism, and $\ell(\psi^{-1}\circ\gamma)\leq \|d\psi^{-1}\|\ell(\gamma)$, which give the desired inequality.
\end{proof}

The proof of Proposition~\ref{prop:monotonicity} then follows the same scheme as the proof of Proposition~2.1 of \cite{Chasse2023}, excepts that we use Lemma~\ref{lem:isoperimetric} above~---~instead of Lemma~2.1 of \cite{Chasse2023}~---~to estimate the local action $a(\gamma')$ for arcs $\gamma'$ with boundary on $L'$.

\bibliographystyle{alpha}
\bibliography{ham-diffeo}

\end{document}